\theoremstyle{plain}
\newtheorem{lemma}{Lemma}
\theoremstyle{plain}
\newtheorem{theorem}{Theorem}
\theoremstyle{plain}
\newtheorem{corollary}{Corollary}
\theoremstyle{definition}
\newtheorem{example}{Example}
\theoremstyle{definition}
\newtheorem{remark}{Remark}
\begin{document}

\title{CHAOTIC BEHAVIOR OF UNIFORMLY CONVERGENT NONAUTONOMOUS SYSTEMS WITH RANDOMLY PERTURBED TRAJECTORIES}

\author{LESZEK SZA\L{}A\footnote{leszek.szala@math.slu.cz}}


\maketitle

\begin{abstract}
We study nonautonomous discrete dynamical systems with randomly perturbed trajectories. We suppose that such a system is generated by a sequence of continuous maps which converges uniformly to a map $f$. We give conditions, under which a recurrent point of a (standard) autonomous discrete dynamical system generated by the limit function $f$ is also recurrent for the nonautonomous system with randomly perturbed trajectories. We also provide a necessary condition for a nonautonomous discrete dynamical system to be nonchaotic in the sense of Li and Yorke with respect to small random perturbations.
\end{abstract}

\section{Introduction}
We consider discrete dynamical systems generated by continuous functions defined on the
Cartesian product $I^{m}$ of $m$ intervals $I=[0,1]$, where $m$ is a positive integer.
Their values are subjected to small random perturbations.
Nonautonomous discrete dynamical systems (with no perturbations) have been recently studied because of their applications, e.g., in biology (\cite{Sen:The}, \cite{Elyadi:Global}, \cite{Wright:Periodic}), medicine (\cite{Coutinho:Threshold} and \cite{Lou:Threshold}), economy (\cite{Zhang:Discrete}), physics (\cite{Joshi:Nonlinear}).
In our systems perturbations are involved, because in practical situations they
often exist.
By $\mathcal{C}(X)$ we denote the set of all continuous functions $f\colon\ X\to X$, where $X$ is a compact metric space.
A sequence of functions $(f_{n})_{n=0}^{\infty}$ is denoted by $f_{0,\infty}$.
If such a sequence converges uniformly to the limit function $f$, we denote it by $f_{n}\rightrightarrows f$.
The main aim of this paper is to study, whether a recurrent point of a standard  (i.e., autonomous with no perturbations) discrete dynamical system $(I^{m},f)$ with $f\in\mathcal{C}(I^{m})$ remains recurrent for a system generated by a sequence $f_{0,\infty}$ in $\mathcal{C}(I^{m})$, where $f_{n}\rightrightarrows f$ and a random perturbation is added to every iteration.
The assumption about uniform convergence is common when nonautonomous discrete dynamical systems are studied.
For example \cite{Kolyada:Topological} showed that the topological entropy of a system $(X,f_{0,\infty})$, where $X$ is a compact metric space and $f_{0,\infty}$ is a sequence of continuous selfmaps of $X$ converging uniformly to $f$, is less than or equal to the topological entropy of $(X,f)$.
Notice that this inequality does not hold if the convergence is not uniform (see \cite{Balibrea:Weak}).
If, additionally, $X=I$, the elements of $f_{0,\infty}$ are surjective and the topological entropy of $(I,f)$ equals zero, then every infinite $\omega$-limit set of $(I,f)$ is an $\omega$-limit set of $(I,f_{0,\infty})$ (see \cite{Stefankova:Inheriting}).

Throughout this paper $\mathbb{N}$ is the set of positive integers and $\mathbb{N}_{0}=\mathbb{N}\cup\{0\}$.
The $i$-th coordinate of $x\in\mathbb{R}^{m}$ is denoted by $x^{(i)}$.
Let $n\in\mathbb{N}$.
The {\itshape $n$-th iteration} of $f$ is defined by $f^{n}(x)=f(f^{n-1}(x))$ and $f^{0}(x)=x$.
Let $\delta>0$.
We call $(x_{i})_{i=0}^{n}$ a {\itshape $\delta$-chain} if $|f(x_{i})-x_{i+1}|<\delta$ for each $i=0,\ldots,n-1$.
If $f_{0,\infty}=(f_{0},f_{1},f_{2},\ldots)$ is a sequence in $\mathcal{C}(X)$, a {\itshape nonautonomous discrete dynamical system} is a pair $(X,f_{0,\infty})$.
The {\itshape trajectory} of $x_{0}$ under $f_{0,\infty}$ is the sequence $(x_{n})_{n=0}^{\infty}$ defined by $x_{n+1}=f_{n}(x_{n})$ for each $n\in\mathbb{N}_{0}$. A discrete (autonomous) dynamical system $(X,f)$, with $f\in\mathcal{C}(X)$ is a particular case of a nonautonomous system $(X,f_{0,\infty})$ with $f_{0,\infty}=(f,f,\ldots)$.
When we deal with recurrence, sometimes it will be necessary to remove a certain number of first elements of $f_{0,\infty}$. For such a sequence with first $k$ elements removed we use a symbol $f_{k,\infty}$.
Whenever we mention the notion of random variables, we assume that they are defined on $\Omega$ where $(\Omega,\Sigma,P)$ is a fixed probability space.
By $\|\cdot\|$ we mean the maximum norm defined on $\mathbb{R}^{m}$, i.e., $\|x\|=\max_{i=1,\ldots,m}|x^{(i)}|$ for $x\in\mathbb{R}^{m}$. The open ball of a radius $r>0$ centered at $x$ is denoted by $B(x,r)$.

A family $\mathcal{F}\subseteq\mathcal{C}(I^{m})$ is {\itshape equicontinuous} at $x\in I^{m}$ if for each $\varepsilon>0$ there is $\delta>0$ such that for each $f\in\mathcal{F}$ and each $y\in I^{m}$, $\|x-y\|<\delta$ implies $\|f(x)-f(y)\|<\varepsilon$. For convenience we recall the well-known Ascoli theorem (see, e.g., \cite{Dieudonne:Foundations}), which is used in the proofs presented in this paper.

\begin{theorem}[Ascoli]
Let $E$ be a compact metric space, $F$ be a Banach space and $\mathcal{C}_{F}(E)$ be the space of all continuous functions defined on $E$ with values in $F$.
Then the closure of $H\subseteq\mathcal{C}_{F}(E)$ is compact if and only if $H$ is equicontinuous and the closure of $\{f(x),f\in H\}$ is compact for each $x\in E$.
\end{theorem}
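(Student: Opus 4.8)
The plan is to prove both implications through the characterization of compactness by sequential compactness, exploiting that $\mathcal{C}_{F}(E)$ equipped with the supremum metric $\rho(f,g)=\sup_{x\in E}\|f(x)-g(x)\|$ is a complete metric space. Completeness here uses both hypotheses on the underlying spaces: since $E$ is compact and $F$ is a Banach space, a uniform (Cauchy) limit of continuous $F$-valued functions is again a continuous $F$-valued function. As $\overline{H}$ is a closed subset of a complete space, it is itself complete, so establishing compactness of $\overline{H}$ reduces to showing that every sequence in it admits a convergent subsequence.

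For the easier implication, assume $\overline{H}$ is compact. Pointwise relative compactness is immediate: for fixed $x\in E$ the evaluation map $\mathrm{ev}_{x}\colon\mathcal{C}_{F}(E)\to F$ given by $\mathrm{ev}_{x}(f)=f(x)$ is $1$-Lipschitz, hence continuous, so it carries the compact set $\overline{H}$ onto a compact subset of $F$ containing $\{f(x):f\in H\}$, whose closure is therefore compact. For equicontinuity I would invoke total boundedness of $\overline{H}$: cover $\overline{H}$ by finitely many $\rho$-balls of radius $\varepsilon/3$ with centres $g_{1},\dots,g_{N}$; each $g_{i}$ is uniformly continuous on the compact space $E$, so a common $\delta>0$ can be chosen forcing $\|g_{i}(x)-g_{i}(y)\|<\varepsilon/3$ whenever $d(x,y)<\delta$, and a three-term triangle inequality then yields $\|f(x)-f(y)\|<\varepsilon$ for every $f\in H$.

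For the substantive implication, assume $H$ is equicontinuous and pointwise relatively compact, and take an arbitrary sequence $(f_{n})$ in $H$. Since $E$ is compact metric it is separable; fix a countable dense set $\{x_{k}\}_{k\in\mathbb{N}}$. Using pointwise relative compactness at each $x_{k}$ together with a Cantor diagonal argument, I would extract a single subsequence $(f_{n_{j}})$ converging in $F$ at every $x_{k}$. The hard part is upgrading this pointwise convergence on a dense set to uniform convergence on all of $E$, and this is exactly where equicontinuity enters decisively. Given $\varepsilon>0$, equicontinuity supplies a $\delta$ controlling the oscillation uniformly over $H$; compactness of $E$ lets me cover $E$ by finitely many $\delta$-balls centred at points of the dense set; on that finite set the subsequence is Cauchy, and the equicontinuity estimate propagates the Cauchy property to the whole space, so $(f_{n_{j}})$ is uniformly Cauchy. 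Completeness of $\mathcal{C}_{F}(E)$ then furnishes a uniform limit lying in $\overline{H}$, whence $\overline{H}$ is sequentially compact and therefore compact. I expect this diagonalization-plus-equicontinuity step to be the main obstacle, since it is where the two hypotheses must be combined correctly and where the passage from the dense set to all of $E$ is genuinely required.
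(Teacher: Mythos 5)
The paper does not prove this statement at all: it is recalled as background (``the well-known Ascoli theorem'') with a citation to Dieudonn\'e's \emph{Foundations of Modern Analysis}, so there is no proof in the paper to compare yours against. Your argument is the standard Arzel\`a--Ascoli proof via sequential compactness plus Cantor diagonalization, and it is essentially correct: the easy direction (evaluation maps are $1$-Lipschitz; total boundedness plus uniform continuity of the centres of an $\varepsilon/3$-net gives equicontinuity) and the hard direction (diagonal extraction over a countable dense set, then upgrading to a uniformly Cauchy sequence via equicontinuity and a finite cover, then completeness of $\mathcal{C}_{F}(E)$) are all sound. Two small points would need to be filled in for full rigor. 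First, the paper's notion of equicontinuity is pointwise (a $\delta$ for each $x\in E$), whereas your covering step tacitly uses a single $\delta$ valid on all of $E$; you should either prove that pointwise equicontinuity on a compact space implies uniform equicontinuity, or run the cover with point-dependent radii $\delta_{x}$ and a slightly longer triangle-inequality chain. Second, you extract subsequences only from sequences in $H$, while sequential compactness of $\overline{H}$ concerns sequences in $\overline{H}$; a one-line approximation (pick $f_{n}\in H$ with $\rho(f_{n},g_{n})<1/n$) closes this. For comparison, the proof in the cited source proceeds differently: Dieudonn\'e characterizes relatively compact subsets of a complete space by total boundedness and constructs, from equicontinuity and pointwise precompactness, a finite $\varepsilon$-net of ``model functions,'' thereby avoiding diagonalization and separability altogether; your route is the more common metric-space textbook argument, which is perfectly adequate here since $E$ compact metric makes everything separable.
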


The results presented in this paper mainly concern $(f_{0,\infty},\delta)$-recurrence for the case of (nonautonomous) $(f_{0,\infty},\delta)$-processes. It is a generalization of $(f,\delta)$-recurrence, introduced in \cite{Szala:Recurrence} for $(f,\delta)$-processes, which have been studied in literature, e.g., by \cite{Jankova:ChaosIn}, \cite{Jankova:Maps} and \cite{Jankova:Systems}. Let $f_{0,\infty}$ be a sequence in $\mathcal{C}(I^{m})$, $m\in\mathbb{N}$ and $\delta>0$. In order to define an $(f_{0,\infty},\delta)$-process it is necessary to consider continuous extensions of all $f_{0}$, $f_{1}$, ...
Let $g$ be any of these functions.
Then we extend its domain to $\mathbb{R}^{m}$ in such a way that $g(\mathbb{R}^{m}\setminus I^{m})\subseteq g(\partial I^{m})$, where $\partial A$ denotes the boundary of $A$.
In order to keep the notation simple, we denote this extension by $g$, as well.
An {\itshape $(f_{0,\infty},\delta)$-process that begins at $x_{0}$} is a sequence of random variables defined by the formula $X_{n+1} = f_{n}(X_{n})+(\xi^{(1)}_{n},\ldots,\xi^{(m)}_{n})$, $n\in\mathbb{N}_{0}$ and $X_{0} = x_{0}$, where all $\xi^{(j)}_{k}$, $j=1,\ldots,m$, $k=0,1,\ldots$ are independent and have uniform continuous distributions on $[-\delta,\delta]$.
Then a point $x\in I^{m}$ is called {\itshape $(f_{0,\infty},\delta)$-recurrent} if for each open neighborhood $U$ of $x$ and each $\delta'\in(0,\delta)$,
\begin{eqnarray}
\label{recurrent-def}
P\left(\bigcap_{n=1}^{\infty}\bigcup_{k=n}^{\infty}\{X_{k}\in U\}\right)=1,
\end{eqnarray}
where $(X_{n})$ is any $(f_{0,\infty},\delta')$-process that begins at $x$.
There is a link between the standard notion of recurrence for standard discrete dynamical systems (with no perturbations) and $(f_{0,\infty},\delta)$-recurrence. In the first case, in each neighborhood of the point, which is said to be recurrent, there are infinitely many points of its trajectory. In the second case, infinitely many of the events $\{X_{k}\in U\}$, $k=1,2,\ldots$ occur with probability one.

Recall the definition of chaos in the sense of Li and Yorke. A function $f\in\mathcal{C}(I)$ is {\itshape chaotic in the sense of Li and Yorke} if there is an uncountable set $S\subseteq I$ such that for all $x,y\in S$ and $x\neq y$,
\begin{eqnarray*}
\liminf\limits_{n\to\infty}|f^{n}(x)-f^{n}(y)|=0,\ \limsup\limits_{n\to\infty}|f^{n}(x)-f^{n}(y)|>0.
\end{eqnarray*}
\newline
Recall that a point $x\in I^{m}$, where $m\in\mathbb{N}$, is a {\itshape fixed point} of $f\in\mathcal{C}(I^{m})$ if $f(x)=x$. We denote the set of all fixed points of $f$ by $\mathrm{Fix}(f)$. A point $x\in I^{m}$ is {\itshape periodic} with {\itshape period $n$} if $f^{n}(x)=x$ and $f^{k}(x)\neq x$ for any $k=1,\ldots,n-1$. We denote the set of all periodic points of $f$ by $\mathrm{Per}(f)$.
\newline
We say that $f$ is {\itshape nonchaotic} if for each $x\in I$ and each $\varepsilon>0$ there exists periodic point $p$ of $f$ such that $\limsup_{n\to\infty}|f^{n}(x)-f^{n}(p)|<\varepsilon$.
It is well-known, that $f$ is either Li-Yorke chaotic, or it is nonchaotic in the sense of the previous definition, which gives a dichotomy between chaos in the sense of Li and Yorke and the simplicity of any orbit of $f$ (see \cite{Smital:Chaotic} and \cite{Jankova:Characterization}).
\newline
The result providing the dichotomy between simplicity and chaoticity mentioned above was generalized for the case of some nonautonomous systems by \cite{Canovas:Li} as described in the following.
Let $f_{0,\infty}$ be a sequence in $\mathcal{C}(I)$ converging uniformly to $f$. Define $F_{n}(x)=f_{n}\circ\ldots\circ f_{0}(x)$ for each $x\in I$. Then $f_{0,\infty}$ is called {\itshape chaotic in the sense of Li and Yorke} if there is an uncountable set $S\subseteq I$ such that for all $x,y\in S$ and $x\neq y$, $\liminf_{n\to\infty}|F^{n}(x)-F^{n}(y)|=0$ and $\limsup_{n\to\infty}|F^{n}(x)-F^{n}(y)|>0$. \cite{Canovas:Li} uses so called pseudoperiodic points of $f_{0,\infty}$ when approximating orbits of $f_{0,\infty}$. If $f_{n}\rightrightarrows f$, these pseudoperiodic points are just periodic points of $f$. We use the same method when approximating orbits of nonautonomous systems with random perturbations. We obtain a generalization of the result published by \cite{Jankova:ChaosIn}, who studied connections between chaotic properties of $f\in\mathcal{C}(I)$ and chaotic behavior of $(f,\delta)$-processes.
We call $f$ {\itshape nonchaotic stable} if for each $\varepsilon>0$ there exists $\delta>0$ such that for each $g\in\mathcal{C}(I)$ and each $x\in I$, $\|f-g\|<\delta$ implies $\limsup_{n\to\infty}|g^{n}(x)-g^{n}(p)|<\varepsilon$ for some periodic point $p$ of $g$. We call $f$ {\itshape nonchaotic with respect to small random perturbations} if for each $\varepsilon>0$ there exists $\delta>0$ such that for each $\delta'\in (0,\delta)$ and each $x_{0}\in I$,
\begin{eqnarray}
P\left(\exists p\in\mathrm{Per}(f)\colon\ \limsup_{n\to\infty}\left|X_{n}-f^{n}(p)\right|<\varepsilon\right)=1,
\label{nonchaotic_with_respect_to_random_perturbations}
\end{eqnarray}
where $(X_{n})$ is arbitrary $(f,\delta')$-process which begins at $x_{0}$.
\cite{Jankova:ChaosIn} proved that $f$ is nonchaotic with respect to small random perturbations provided that $f$ is nonchaotic stable and also mentioned that the opposite implication is not true. We show an analogous theorem, which concerns nonautonomous systems with randomly perturbed trajectories.

\section{Recurrent points}
Recall that a fixed point $x$ is {\itshape attractive} if there is a neighborhood $U$ of $x$ such that for each $y\in U$, $\lim_{n\to\infty}f^{n}(y)=x$.

Briefly speaking, Theorem \ref{fixed_points} states, that each attractive fixed point of the limit function $f$ is also recurrent in the sens of uniformly convergent nonautonomous system connected to $f$ under a small additive stochastic perturbation. Notice that even for autonomous discrete dynamical systems such a statement is not true if the fixed point is not attractive.

\begin{theorem}
Let $f_{0,\infty}$ be a sequence in $\mathcal{C}(I^{m})$, $m\in\mathbb{N}$.
Assume that $f_{0,\infty}$ converges uniformly to a function $f\in\mathcal{C}(I^{m})$.
Let $x_{0}\in I^{m}$ be an attractive fixed point of $f$.
Then there exist $K\in\mathbb{N}$ and $\delta>0$ such that for each integer $k>K$, $x_{0}$ is $(f_{k,\infty},\delta)$-recurrent.
\label{fixed_points}
\end{theorem}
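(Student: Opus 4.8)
The plan is to combine attractiveness with the smallness of both the perturbation and the eventual discrepancy $\|f_n-f\|$ in order to show that, from anywhere in a forward-invariant neighborhood of $x_0$, the perturbed trajectory returns to every ball $B(x_0,\epsilon)$ with a fixed positive probability in a bounded number of steps, and then to promote this to almost-sure recurrence by a conditional Borel--Cantelli argument.

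First I would record the standard consequence of attractiveness: at every small scale there is an open set $V\ni x_0$ with $\overline V$ compact, $f(\overline V)\subseteq V$, and $f^n\to x_0$ uniformly on $\overline V$; the uniformity is obtained by covering $\overline V$ with finitely many neighborhoods whose $f$-images eventually enter a small trapping ball inside $B(x_0,\epsilon)$ and invoking compactness. Writing $d=\mathrm{dist}(f(\overline V),I^m\setminus V)>0$, I would then use $f_n\rightrightarrows f$ to fix $K$ with $\|f_n-f\|<d/3$ for all $n>K$, and set $\delta<d/3$. A one-step distance estimate (the point $f_n(\overline V)$ lies within $d/3$ of $f(\overline V)$, and the noise moves points by at most $\delta'$) shows that, for every $k>K$ and every $\delta'\in(0,\delta)$, any $(f_{k,\infty},\delta')$-process started at $x_0\in V$ remains in $\overline V$ for all time, almost surely.

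The technical core is a single-block return estimate. Given an arbitrary neighborhood $U$ with $B(x_0,\epsilon)\subseteq U$ and $\delta'\in(0,\delta)$, I choose $N=N(\epsilon)$ from uniform attraction so that $f^N(\overline V)\subseteq B(x_0,\epsilon/2)$, and compare over $N$ steps the perturbed process $X_i$ (within a generic block, so $X_0$ is the state at the start of the block, an element of $\overline V$), driven by tail maps $g_i$ with $\|g_i-f\|\le\gamma$, to the unperturbed orbit $y_i=f^i(X_0)$. With $e_i=\|X_i-y_i\|$ and a nondecreasing modulus of continuity $\omega$ of $f$ on $\overline V$, the triangle inequality gives $e_{i+1}\le\omega(e_i)+\gamma+\|\xi_i\|$ and $e_0=0$; dominating $e_i$ by the deterministic iteration $a_{i+1}=\omega(a_i)+s$ with $s=\gamma+\eta$ shows that $a_N=A(s)$ is continuous with $A(0)=0$, so there is $s^\ast>0$ with $A(s)<\epsilon/2$ whenever $s<s^\ast$. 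The main obstacle lives here: $f$ is only continuous (not a contraction) and $\|f_n-f\|$ is small only eventually, so this estimate cannot be iterated indefinitely from time zero --- it is valid only over a fixed finite horizon $N$ and only once the maps are close enough to $f$.

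I would resolve both difficulties by placing the return windows far in the future. Since $f_n\rightrightarrows f$, the tail discrepancy $\gamma_j=\sup_{n\ge jN}\|f_n-f\|$ decreases to $0$, so I fix $J_0$ with $\gamma_{J_0}<s^\ast/2$ and set $\eta=\min(s^\ast/2,\delta'/2)$. For each block $j\ge J_0$, on the event $G_j$ that all $mN$ noise coordinates of the block are below $\eta$ in absolute value --- an event independent of $\mathcal F_{jN}=\sigma(X_0,\dots,X_{jN})$ with the fixed positive probability $q=(\eta/\delta')^{mN}$ --- the estimate above (applied with $X_0=X_{jN}$, for which $s=\gamma_j+\eta<s^\ast$) yields $\|X_{(j+1)N}-x_0\|<\epsilon/2+\epsilon/2=\epsilon$, hence $X_{(j+1)N}\in B(x_0,\epsilon)\subseteq U$. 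Letting $B_j$ be the event that the trajectory meets $B(x_0,\epsilon)$ during block $j$, this gives $P(B_j\mid\mathcal F_{jN})\ge q$ for every $j\ge J_0$, so $\sum_j P(B_j\mid\mathcal F_{jN})=\infty$ almost surely; L\'evy's conditional form of the Borel--Cantelli lemma then forces infinitely many $B_j$ to occur almost surely, which is exactly \eqref{recurrent-def}. Since $U$ and $\delta'$ were arbitrary, $x_0$ is $(f_{k,\infty},\delta)$-recurrent for every $k>K$, with the $K,\delta$ fixed above. (When $x_0\in\mathrm{int}(I^m)$ one takes $\overline V\subseteq\mathrm{int}(I^m)$ so the process never leaves $I^m$; when $x_0\in\partial I^m$ the same estimates are carried out for the extended maps, whose range stays in $I^m$, with the trapping set adapted to the extension.)
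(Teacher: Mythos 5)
Your proof collapses at its very first step, and everything afterwards depends on it. You assert, as a ``standard consequence of attractiveness,'' that at every small scale there is an open set $V\ni x_{0}$ with $f(\overline{V})\subseteq V$ and with $f^{n}\to x_{0}$ \emph{uniformly} on $\overline{V}$. That is not a consequence of the definition used in the paper (there exists a neighborhood $U$ of $x_{0}$ such that every $y\in U$ satisfies $f^{n}(y)\to x_{0}$); it is precisely asymptotic Lyapunov stability, which is strictly stronger once $m\ge 2$. A standard counterexample is the time-one map of a planar flow with a homoclinic-type structure, say $\dot r=r(1-r)$, $\dot\theta=\sin^{2}(\theta/2)$ on the closed unit disk, transplanted into $I^{2}$ by composing with a retraction of $I^{2}$ onto the disk: the fixed point $x_{0}$ (at $r=1$, $\theta=0$) attracts a whole neighborhood, but a point with arbitrarily small $\theta>0$ travels around the entire circle $r=1$ before converging. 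For this map no trapping neighborhood $V\subseteq B(x_{0},1/10)$ with $f(\overline{V})\subseteq V$ exists (orbits of points in $V$ leave $V$), and for every $n$ there are points of any prescribed neighborhood of $x_{0}$ whose $n$-th iterate is at a fixed positive distance from $x_{0}$, so the convergence $f^{n}\to x_{0}$ is uniform on no neighborhood. Consequently your forward-invariance argument (the distance $d$ does not exist), your choice of $N$ with $f^{N}(\overline{V})\subseteq B(x_{0},\epsilon/2)$, and the block comparison built on both, all fail; the conditional Borel--Cantelli part, which is fine in itself, then has nothing to stand on. (Your circularity is visible in your own justification: you obtain uniformity by invoking ``a small trapping ball,'' i.e., exactly the object whose existence is in question.)

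The paper avoids this by using only a compactness statement that does follow from attraction without any stability: Lemma 1 of the cited paper \cite{Szala:FC} gives $N\in\mathbb{N}$ such that every $x\in B(x_{0},\kappa)$ has \emph{some} iterate $f^{j}(x)\in B(x_{0},\kappa/3)$ with $j\le N$ --- the orbit visits a smaller ball within a bounded time, but need not stay there and need not converge uniformly. (This follows by covering a compact set by the open sets $\{x\colon f^{j}(x)\in B(x_{0},\kappa/3)\}$, $j\in\mathbb{N}_{0}$.) The paper then transfers this visiting property to the nonautonomous unperturbed orbits via uniform convergence and equicontinuity (inequality (\ref{twierdzenie1_istnienie})), so that the estimate can be restarted from each visit point in $B(x_{0},2\kappa/3)\subseteq B(x_{0},\kappa)$, and only then invokes the Borel--Cantelli machinery to handle the noise. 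To repair your argument you would have to replace ``trapping region plus uniform convergence'' by such a uniform hitting-time statement at every target scale $\epsilon$, and run your (otherwise sound) block comparison and conditional Borel--Cantelli between consecutive visits; alternatively you would have to restrict to $m=1$, where an attracting fixed point of a continuous interval map is automatically stable --- but that is a theorem requiring proof, not something you may take for granted, and the statement at hand concerns all $m\in\mathbb{N}$.
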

\begin{proof}
Since $x_{0}$ is an attractive fixed point of $f$, there is $\kappa>0$ such that for each $x\in B(x_{0},\kappa)$, $\lim_{n\to\infty}f^{n}(x)=x_{0}$.
By Lemma 1 in \cite{Szala:FC} there is $N\in\mathbb{N}$ such that
\begin{eqnarray*}
\forall x\in B(x_{0},\kappa)\ \exists j\in\{0,\ldots, N\}\colon\ f^{j}(x)\in B(x_{0},\kappa/3).
\end{eqnarray*}
First we show, that there is $K\in\mathbb{N}$ such that for each integer $k>K$, for each $x\in B(x_{0},\kappa)$ and each sequence $(y_{n})$ defined by $y_{0}=x$ and $y_{i+1}=f_{k+i}(y_{i})$ with $i=0,1,\ldots$,
\begin{eqnarray}
\exists j\in\{0,\ldots,N\}\colon\ y_{j}\in B(x_{0},2\kappa/3).\label{twierdzenie1_istnienie}
\end{eqnarray}
It is sufficient to show that there exists $K\in\mathbb{N}$ such that for each integer $k>K$ and for each $j\in\{0,\ldots,N\}$,
\begin{eqnarray}
\left\|f_{k+j-1}\circ\ldots\circ f_{k+1}\circ f_{k}(x)-f^{j}(x)\right\|<\kappa/3.\label{nier6}
\end{eqnarray}
Then inequality (\ref{twierdzenie1_istnienie}) is a direct consequence of (\ref{nier6}) and the triangular inequality.
Let $j$ be any of $\{0,\ldots,N\}$.
Inequality (\ref{nier6}) holds if
\begin{eqnarray}
\left\|f_{k+j-1}\left(f_{k+j-2}\circ\ldots\circ f_{k}(x)\right)-f_{k+j-1}\left(f_{k+j-1}^{j-1}(x)\right)\right\|<\kappa/6\label{nier2}
\end{eqnarray}
and
\begin{eqnarray}
\left\|f_{k+j-1}^{j}(x)-f^{j}(x)\right\|<\kappa/6.\label{nier3}
\end{eqnarray}
Since $f_{n}\rightrightarrows f$,
there is $K_{1}\in\mathbb{N}$ such that for each integer $k>K_{1}$ inequality (\ref{nier3}) holds. 
Since $f_{k+j-1}$ is continuous,
there is $\varepsilon_{1}\in (0,\kappa/6)$ (by Ascoli Theorem it does not depend on $k$ and $j$) such that inequality (\ref{nier2}) holds if
\begin{eqnarray*}
\|f_{k+j-2}\circ\ldots\circ f_{k}(x)-f_{k+j-1}^{j-1}(x)\|<\varepsilon_{1}.
\end{eqnarray*}
This inequality holds if
\begin{eqnarray}
\left\|f_{k+j-2}\circ\ldots\circ f_{k+1}\circ f_{k}(x)-f_{k+j-2}\left(f_{k+j-1}^{j-2}(x)\right)\right\|<\varepsilon_{1}/2\label{nier4}
\end{eqnarray}
and
\begin{eqnarray}
\left\|f_{k+j-2}\left(f_{k+j-1}^{j-2}(x)\right)-\left(f_{k+j-1}\left(f_{k+j-1}^{j-2}(x)\right)\right)\right\|<\varepsilon_{1}/2.\label{nier5}
\end{eqnarray}
Since $f_{0,\infty}$ is a Cauchy sequence, there is
integer $K_{2}>K_{1}$ such that for each integer $k>K_{2}$ inequality (\ref{nier5}) holds.
For inequality (\ref{nier4}) we use continuity of $f_{k+j-2}$ in the same way as above. It implies the existence of $\varepsilon_{2}\in (0,\varepsilon_{1}/2)$ such that (\ref{nier4}) holds if
\begin{eqnarray*}
\left\|f_{k+j-3}\circ\ldots\circ f_{k}(x)-f_{k+j-1}^{j-2}(x)\right\|<\varepsilon_{2}.
\end{eqnarray*}
Using the same procedure several times we obtain the
sequence of positive integers $K_{1}<K_{2}<\ldots<K_{j}$.
Choose $K=K_{j}$.
In order to show $(f_{k,\infty},\delta)$-recurrence we use Borel-Cantelli Lemma in the same way as in the proof of Theorem 2 in \cite{Szala:Recurrence}.
We can replace $f$ by elements of $f_{0,\infty}$ in the proof mentioned above, since $\{f,f_{0},f_{1},\ldots\}$ is an equicontinuous family, which follows by Ascoli theorem.
\end{proof}

The result from Theorem \ref{fixed_points} can be
generalized to the case of attractive periodic points.
The proof is evident and is omitted.
Recall that a periodic point with period $n$ is {\itshape attractive} if it is an attractive fixed point of $f^{n}$.

\begin{theorem}
\label{Tw_punkty_okresowe}
Let $f_{0,\infty}$ be a sequence in $\mathcal{C}(I^{m})$, $m\in\mathbb{N}$.
Assume that $f_{0,\infty}$ converges uniformly to a function $f\in\mathcal{C}(I^{m})$.
Let $x_{0}\in I^{m}$ be an attractive periodic point of $f$ with period $n$.
Then there exist $K\in\mathbb{N}$ and $\delta>0$ such that for each integer $k>K$, $x_{0}$ is $(f_{k,\infty},\delta)$-recurrent.
\end{theorem}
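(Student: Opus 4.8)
The plan is to reduce the periodic case to the fixed-point situation handled in Theorem \ref{fixed_points} by passing to the $n$-th iterate. Since $x_{0}$ is an attractive periodic point of $f$ with period $n$, it is by definition an attractive fixed point of $g:=f^{n}$. Hence there is $\kappa>0$ with $\lim_{j\to\infty}g^{j}(x)=\lim_{j\to\infty}f^{nj}(x)=x_{0}$ for every $x\in B(x_{0},\kappa)$, and Lemma 1 in \cite{Szala:FC} applied to $g$ yields a uniform bound $N\in\mathbb{N}$ such that for each $x\in B(x_{0},\kappa)$ there is $j\in\{0,\ldots,N\}$ with $g^{j}(x)=f^{nj}(x)\in B(x_{0},\kappa/3)$. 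In other words, the autonomous trajectory of any point of the basin, sampled at multiples of $n$, enters $B(x_{0},\kappa/3)$ within $nN$ iterations of $f$.

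The core step is then the deterministic approximation, carried out exactly as in the proof of Theorem \ref{fixed_points} but with $f^{nj}$ in place of $f^{j}$ and compositions of length up to $nN$ in place of length $N$. Concretely, I would show that there is $K\in\mathbb{N}$ such that for every integer $k>K$, every $x\in B(x_{0},\kappa)$ and every $\ell\in\{0,\ldots,nN\}$,
\begin{eqnarray*}
\left\|f_{k+\ell-1}\circ\cdots\circ f_{k+1}\circ f_{k}(x)-f^{\ell}(x)\right\|<\kappa/3.
\end{eqnarray*}
This is inequality (\ref{nier6}) of the previous proof with $N$ replaced by $nN$; it follows by the same finite telescoping argument, splitting the difference between a nonautonomous composition and the corresponding iterate of $f$ into one term controlled by the uniform convergence $f_{n}\rightrightarrows f$ (as in (\ref{nier3})) and one term controlled by the continuity of a single $f_{k+\ell-1}$ (as in (\ref{nier2})), the latter made uniform in $k$ and $\ell$ by the equicontinuity of $\{f,f_{0},f_{1},\ldots\}$ guaranteed by Ascoli's theorem. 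Combining the displayed inequality for $\ell=nj$ with the entry time from the first paragraph and the triangle inequality, one obtains that for each $k>K$ and each $x\in B(x_{0},\kappa)$ the shifted nonautonomous trajectory $y_{0}=x$, $y_{i+1}=f_{k+i}(y_{i})$ satisfies $y_{nj}\in B(x_{0},2\kappa/3)$ for some $j\in\{0,\ldots,N\}$, the analogue of (\ref{twierdzenie1_istnienie}).

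Finally I would handle the additive perturbations exactly as in Theorem \ref{fixed_points}, i.e., via the Borel-Cantelli argument used in the proof of Theorem 2 in \cite{Szala:Recurrence}, but tracking the perturbed process only at the times that are multiples of $n$ (which are the only times at which returns to a neighborhood $U$ of the fixed point $x_{0}$ of $f^{n}$ are expected): whenever the process sits in the basin, within $nN$ steps there is an event of probability bounded below on which the perturbations are small enough to keep the trajectory in the basin and to place it in a prescribed $U$ at the relevant multiple-of-$n$ time, and the independence of the perturbations across disjoint blocks of length $nN$ makes these events independent, so the second Borel-Cantelli lemma forces a return to $U$ infinitely often with probability one. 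The replacement of $f$ by the elements of $f_{0,\infty}$ there is again legitimate because $\{f,f_{0},f_{1},\ldots\}$ is equicontinuous. The only point requiring care, and hence the main obstacle, is the uniform approximation over the longer compositions of length up to $nN$: one must verify that the telescoping of the previous proof still terminates after finitely many steps and that the resulting threshold $K$ is independent of $x$ and of $\ell\le nN$, which is precisely where equicontinuity via Ascoli's theorem is essential. Since the period $n$ merely inflates the finite horizon $N$ to $nN$ and changes nothing else, this is why the authors regard the proof as evident.
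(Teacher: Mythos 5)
Your proposal is correct and is essentially the argument the paper has in mind: the paper omits the proof as ``evident'' precisely because it is the proof of Theorem~\ref{fixed_points} repeated with $x_{0}$ viewed as an attractive fixed point of $f^{n}$, the entry-time bound $N$ from Lemma~1 of \cite{Szala:FC} applied to $f^{n}$, the telescoping approximation carried out over the finite horizon $nN$ instead of $N$ (uniformly in $k$ and $\ell$ via equicontinuity from Ascoli), and the same Borel--Cantelli argument from \cite{Szala:Recurrence}. You also correctly avoid the tempting but flawed shortcut of applying Theorem~\ref{fixed_points} directly to the composed sequence $g_{j}=f_{n(j+1)-1}\circ\cdots\circ f_{nj}$, which would perturb only once per $n$ steps rather than at every step.
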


The following example shows that the assumption about uniform convergence in Theorem \ref{fixed_points} is necessary.

\begin{example}
\label{przyklad}
\noindent
Let $f\in\mathcal{C}(I)$ be defined by the formula
\begin{eqnarray*}
f(x)=\begin{cases}0, & x\in\left[0,\frac{1}{2}\right], \\ 4x-2, & x\in\left(\frac{1}{2},\frac{3}{4}\right], \\ 1, & x\in\left(\frac{3}{4},1\right]
\end{cases}
\end{eqnarray*}
(see Figure \ref{wykres4}).

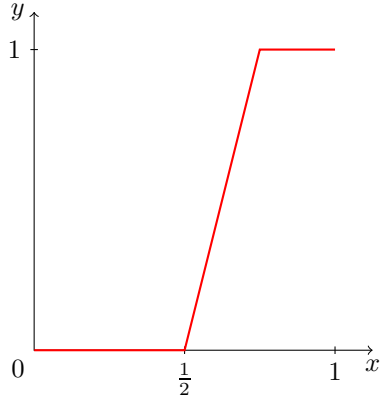
\begin{figure}[htb]
\begin{center}
\begin{tikzpicture}
\draw [<->] (0,4.5) node [left] {$y$} -- (0,0) -- (4.5,0) node [below] {$x$};
\draw (-0.05,4) node [left] {1} -- (0.05,4);
\draw (0,0) node [below left] {0} -- (0,0);
\draw (2,-0.05) node [below] {$\frac{1}{2}$}-- (2,0.05);
\draw (4,-0.05) node [below] {1} -- (4,0.05);
\draw [red, thick] (0,0) -- (1,0) -- (2,0) -- (3,4) -- (4,4);
\end{tikzpicture}
\caption{The graph of $f$ from Example \ref{przyklad}.}
\label{wykres4}
\end{center}
\end{figure}

\noindent
Obviously $x_{0}=0$ is an attractive fixed point of $f$.
Consider a sequence $f_{0,\infty}$ in $\mathcal{C}(I)$ defined for each $n\in\mathbb{N}_{0}$ by
\begin{eqnarray*}
f_{n}\left(x\right)=\begin{cases}8\cdot 2^{n}x, & x\in \left[0,\frac{1}{8\cdot 2^{n}}\right], \\ -8\cdot 2^{n}x+2, & x\in \left(\frac{1}{8\cdot 2^{n}},\frac{1}{4\cdot 2^{n}}\right],\\ 0, & x\in\left(\frac{1}{4\cdot 2^{n}},\frac{1}{2}\right], \\ 4x-2, & x\in \left(\frac{1}{2},\frac{3}{4}\right],\\ 1, & x\in\left(\frac{3}{4},1\right]\end{cases}
\end{eqnarray*}
(the graphs of $f_{0}$ and $f_{1}$ are sketched on Figure $\ref{wykres2}$).
Then
$f_{0,\infty}$ converges pointwise to $f$,
but the convergence is not uniform.

\noindent
In order to show that $0$ is not $(f_{k,\infty},\delta)$-recurrent 
for any $\delta>0$ and any $k\in\mathbb{N}$,
let $\delta\in (0,1/5)$ and $k\in\mathbb{N}_{0}$. Define $X_{0}=x_{0}$ and $X_{n+1}=f_{k+n}(X_{n})+\xi_{n}$, where $n\in\mathbb{N}_{0}$ and $(\xi_{0},\xi_{1},\ldots)$ is a sequence of random variables, which are independent and have uniform continuous distributions on $(-\delta,\delta)$. Then $P(X_{2}\in [4/5,1])>0$, which implies
\begin{eqnarray*}
P\left(\bigcap_{k=2}^{\infty}\left\{X_{k}\in\left[\frac{4}{5},1\right]\right\}\right)>0.
\end{eqnarray*}

\begin{figure}[htb]
\begin{center}
\begin{tikzpicture}
\draw [<->] (0,4.5) node [left] {$y$} -- (0,0) -- (4.5,0) node [below] {$x$};
\draw (-0.05,4) node [left] {1} -- (0.05,4);
\draw (0,0) node [below left] {0} -- (0,0);
\draw (2,-0.05) node [below] {$\frac{1}{2}$}-- (2,0.05);
\draw (4,-0.05) node [below] {1} -- (4,0.05);
\draw [red, thick] (0,0) -- (0.5,3.9) -- (1,0) -- (2,0) -- (3,4) -- (4,4);
\draw [<->] (6,4.5) node [left] {$y$} -- (6,0) -- (10.5,0) node [below] {$x$};
\draw (5.95,4) node [left] {1} -- (6.05,4);
\draw (6,0) node [below left] {0} -- (6,0);
\draw (8,-0.05) node [below] {$\frac{1}{2}$}-- (8,0.05);
\draw (10,-0.05) node [below] {1} -- (10,0.05);
\draw [red, thick] (6,0) -- (6.25,3.9) -- (6.5,0) -- (8,0) -- (9,4) -- (10,4);

\end{tikzpicture}
\caption{The graphs of $f_{0}$ and $f_{1}$ from Example \ref{przyklad}.}
\label{wykres2}
\end{center}
\end{figure}
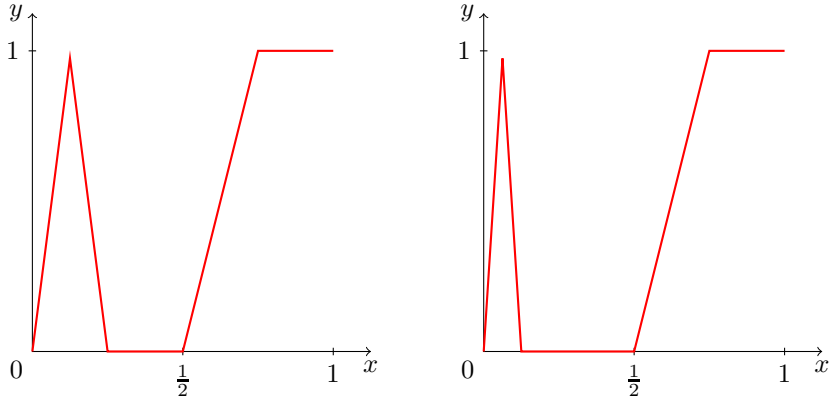
\end{example}

Recall that the {\itshape $\omega$-limit set} of a point $x\in I$ for $f\in\mathcal{C}(I)$ is the set of all limit points of the sequence $(f^{n}(x))_{n=0}^{\infty}$. We say that an $\omega$-limit set $\tilde{\omega}$ is {\itshape maximal} if for each $\omega$-limit set $\tilde{\omega}_{1}$, $\tilde{\omega}_{1}\subseteq\tilde{\omega}$ or $\tilde{\omega}_{1}\cap\tilde{\omega}=\emptyset$.
We say that $f$ is of {\itshape type $2^{\infty}$} if it has a periodic point of period $2^{n}$ for all $n\in\mathbb{N}$ and no periodic points of other periods.
Let $f\in\mathcal{C}(I)$ and $A\subseteq I$. We say that $A$ is {\itshape invariant} for $f$ if $f(A)\subseteq A$.
The following theorem
is
formulated in dimension one only, because in this case an infinite $\omega$-limit set of a function of type $2^{\infty}$ has a special structure described in the following lemma (see Lemma 3.1 and Theorem A \cite{Fedorenko:Characterization}).

\begin{lemma}
\label{rozklad_na_czesci_okresowe}
Let $f\in\mathcal{C}(I)$ be of type $2^{\infty}$.
Let $f$ have an infinite $\omega$-limit set $\tilde{\omega}$.
Then for each $k\in\mathbb{N}$ there is a decomposition $\{M(i,k),i=1,\ldots,2^{k}\}$ of $\tilde{\omega}$ such that every two sets $M(i,k)$, $M(j,k)$, where $i\neq j$, are separated by disjoint compact intervals and $f(M(i,k))=M(i+1,k)$ for any $i(\mathrm{mod}2^{k})$ .
\end{lemma}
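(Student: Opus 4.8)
The plan is to build, for each $k$, a cycle of $2^k$ compact intervals that is cyclically permuted by $f$ and carries $\tilde\omega$, and then to set $M(i,k)=\tilde\omega\cap J_i^{(k)}$, where the $J_i^{(k)}$ are these intervals. The whole argument rests on the well-known solenoidal structure of infinite $\omega$-limit sets of zero-entropy interval maps, of which type $2^\infty$ maps are the prototype.

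First I would set up the nested cyclic interval systems. Call a family $\{J_1,\ldots,J_n\}$ of compact intervals with pairwise disjoint interiors a cycle of intervals of period $n$ if $f(J_i)\subseteq J_{i+1}$ for every $i$, indices taken modulo $n$. Starting from $I$ itself as a trivial cycle of period $1$, I would prove by induction on $k$ that a cycle $\{J_1^{(k)},\ldots,J_{2^k}^{(k)}\}$ of period $2^k$ refines into a cycle of period $2^{k+1}$ in which each $J_i^{(k)}$ splits into exactly two intervals. For the inductive step, note that $g:=f^{2^k}$ carries each $J_i^{(k)}$ into itself, since $f^{2^k}(J_i^{(k)})\subseteq J_{i+2^k}^{(k)}=J_i^{(k)}$, and that the $f$-orbit of period $2^{k+1}$ meets $J_i^{(k)}$ in a $g$-orbit of period $2$; the structure theory then supplies two subintervals of $J_i^{(k)}$ with disjoint interiors that $g$ interchanges, and transporting one of them by $f,f^2,\ldots,f^{2^{k+1}-1}$ produces the finer cycle, each $J_i^{(k)}$ splitting into exactly two pieces.

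Next I would place $\tilde\omega$ inside this structure. Invoking the part of the cited theory that an infinite $\omega$-limit set of a zero-entropy interval map is solenoidal, I take $\tilde\omega\subseteq\bigcup_i J_i^{(k)}$ for every $k$ and set $M(i,k)=\tilde\omega\cap J_i^{(k)}$. The intervals $J_i^{(k)}$ are genuinely separated---a lower-order periodic point lies in each gap---so the closures of the complementary gaps are disjoint compact intervals separating the $M(i,k)$, and in particular the $M(i,k)$ are pairwise disjoint. From $f(J_i^{(k)})\subseteq J_{i+1}^{(k)}$ and $f(\tilde\omega)=\tilde\omega$ one gets $f(M(i,k))\subseteq M(i+1,k)$, and since $\bigcup_i f(M(i,k))=f(\tilde\omega)=\tilde\omega=\bigcup_i M(i,k)$ with the pieces disjoint, these inclusions are equalities: $f(M(i,k))=M(i+1,k)$ for every $i$ modulo $2^k$. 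No $M(i,k)$ is empty, since otherwise all would be and $\tilde\omega$ finite, which completes the decomposition.

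The main obstacle will be the solenoidal structure itself: proving that at each level the cycle splits into exactly two pieces and that the infinite set $\tilde\omega$ lies in every cycle of intervals and hence meets all $2^k$ components. This is precisely where the hypothesis of type $2^\infty$ is indispensable---higher, in particular odd, periods would force collapse through a horseshoe and positive entropy, while a finite Sharkovskii type would terminate the period doubling prematurely---and it is the substance of the structure results of \cite{Fedorenko:Characterization}. Once the nested cyclic system and the solenoidality are in hand, the remaining claims follow by the elementary bookkeeping above.
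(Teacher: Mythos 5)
The paper gives no proof of this lemma at all---it is quoted directly from Lemma 3.1 and Theorem A of \cite{Fedorenko:Characterization}---and your argument rests on exactly the same ingredients, invoking that reference's structure theory for the solenoidality of $\tilde\omega$ and the nested period-$2^{k}$ cycles of intervals, then deriving the decomposition $M(i,k)=\tilde\omega\cap J_{i}^{(k)}$, the separation, and the equalities $f(M(i,k))=M(i+1,k)$ by correct elementary bookkeeping. So your proposal matches (and fills in slightly more detail than) the paper's treatment; the genuinely hard steps remain citations in both versions.
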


The following result provides another class of points, that are known to be recurrent for both, standard discrete dynamical systems (without perturbations) and $(f_{0,\infty},\delta)$-processes, provided that $\delta>0$ is small enough.

\begin{theorem}
\label{tw_gl_1}
Let $f\in\mathcal{C}(I)$ be of type $2^{\infty}$ and have a maximal infinite $\omega$-limit set $\tilde{\omega}$.
Assume that the minimal closed and invariant interval $V$ containing $\tilde{\omega}$ contains exactly one periodic orbit of period $2^{n}$ for each $n\in\mathbb{N}$ and this periodic orbit is not attractive.
Let $f_{0,\infty}$ be a sequence in $\mathcal{C}(I)$, which converges uniformly to $f$.
Then there exist $K\in\mathbb{N}$ and $\delta>0$ such that for each integer $k>K$, every point $x\in\tilde{\omega}$ is $(f_{k,\infty},\delta)$-recurrent.
\end{theorem}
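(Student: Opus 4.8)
The plan is to reduce the probabilistic statement to a deterministic confinement property together with a uniform positive lower bound on the probability of hitting the target neighborhood, and then to conclude by a conditional Borel--Cantelli argument as in the proof of Theorem~\ref{fixed_points}. First I would fix the combinatorial skeleton supplied by Lemma~\ref{rozklad_na_czesci_okresowe}. Since $\tilde{\omega}$ is an infinite $\omega$-limit set of a map of type $2^{\infty}$, $f$ restricted to $\tilde{\omega}$ behaves as an adding machine, so the mesh $\max_{i}\operatorname{diam}M(i,\ell)$ of the decomposition $\{M(i,\ell)\}$ tends to $0$ as $\ell\to\infty$. I would choose a coarse level $\ell_{0}$ and, using that consecutive pieces $M(i,\ell_{0})$ are separated by the disjoint compact gap-intervals from the lemma together with $f(M(i,\ell_{0}))=M(i+1,\ell_{0})$, enclose each piece in a closed interval $J_{i}\supseteq M(i,\ell_{0})$ so that the $J_{i}$ are pairwise disjoint and $f(J_{i})\subseteq\operatorname{int}J_{i+1}$ (indices mod $2^{\ell_{0}}$). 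Minimality and maximality of $\tilde{\omega}$ guarantee that the only $\omega$-limit sets meeting $W:=\bigcup_{i}J_{i}$ are $\tilde{\omega}$ and the $2^{n}$-periodic orbits inside $V$, and the hypothesis that these orbits are not attractive is the feature that will prevent a perturbed trajectory from being captured away from $\tilde{\omega}$.

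Next I would fix $K$ and $\delta$ once and for all. Because $f_{n}\rightrightarrows f$, there is $K$ such that for every $k>K$ all tail maps $f_{k+j}$ lie uniformly within a prescribed $\eta$ of $f$; taking $\eta$ and $\delta$ small enough yields $f_{k+j}(J_{i})+[-\delta,\delta]\subseteq\operatorname{int}J_{i+1}$ for all $i,j$. Consequently $W$ is forward invariant for every $(f_{k,\infty},\delta')$-process with $\delta'\in(0,\delta)$: starting from $X_{0}=x\in M(i_{0},\ell_{0})$ the trajectory stays in $W$ surely and satisfies $X_{n}\in J_{(i_{0}+n)\bmod 2^{\ell_{0}}}$ for all $n$, so it returns to the block $J_{i_{0}}$ at every multiple of $2^{\ell_{0}}$. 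This confinement is deterministic, i.e. it holds for every realization of the perturbations, and the single pair $(K,\delta)$ obtained here is exactly what the theorem requires.

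It remains, for a fixed neighborhood $U$ of $x$ and a fixed $\delta'\in(0,\delta)$, to upgrade ``returns to the block $J_{i_{0}}$'' into ``visits to $U$ infinitely often''. I would prove that there are $N\in\mathbb{N}$ and $p_{0}>0$ such that from every $z\in J_{i_{0}}$ the process enters $U$ within $N$ steps with conditional probability at least $p_{0}$, independently of the past. Choosing a finer level $\ell_{1}$ with the piece $M(\cdot,\ell_{1})\ni x$ contained in $U$, I would use minimality to find $s$ for which the deterministic iterate $f^{s}$ carries $M(i_{0},\ell_{0})$ into this finer piece; by continuity $f^{s}$ maps a neighborhood into $U$, and one perturbation step of size $<\delta'$ then lands in $U$ with positive density. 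The non-attractiveness of the periodic orbits is what lets $p_{0}$ be bounded below uniformly in $z$: an attractive orbit inside $V$ would act as a sink toward which the nearly deterministic trajectory could collapse, destroying such a bound. With $N$ and $p_{0}$ in hand, the attempt windows of length $N$ each succeed with conditional probability at least $p_{0}$ regardless of the past, so the conditional (L\'evy) form of the Borel--Cantelli lemma forces $\{X_{n}\in U\}$ to occur for infinitely many $n$ with probability one, which is precisely \eqref{recurrent-def}.

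The main obstacle I anticipate is establishing this uniform positive lower bound $p_{0}$ for arbitrarily small $U$ and $\delta'$. Since small perturbations move the trajectory only slightly per step, one cannot simply kick into $U$; instead one must combine the odometer structure (to steer the deterministic part close to $x$), the non-attractiveness hypothesis (to rule out collapse onto a periodic sink), and the coarse trapping tube (so that accumulated perturbation errors stay controlled and do not drift the trajectory out of the relevant block). Quantifying the landing probability of the final perturbation step uniformly over all starting points $z$ in the tube, while respecting these three constraints simultaneously, is the delicate point.
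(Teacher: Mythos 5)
Your overall architecture---deterministic confinement, a uniform positive probability of hitting $U$ within a bounded number of steps, then a Borel--Cantelli argument---is the same as the paper's, but the middle step, the uniform lower bound $p_{0}$, is exactly where the entire content of the theorem lies, and you leave it unproved (you say so yourself in your last paragraph). The paper obtains this bound by explicitly constructing, for every point $z$ of a trapping interval $J$ and every $\delta'>0$, a $\delta'$-chain from $z$ into $B(x_{0},\kappa/4)$, via a case analysis: $z$ inside some hull $U(i,k)$ of a piece $M(i,k)$; $z$ in a gap $R(i,j)$ and periodic; $z$ in a gap and not periodic. It is in the last two cases that non-attractiveness is used \emph{constructively}: near a non-attractive periodic point one finds, using the intermediate value theorem, nearby points whose orbits reach $\bigcup_{i}U(i,k)$ or accumulate on $\tilde{\omega}$, and these serve as links of the chain. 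Compactness of $\overline{J}$ then makes the chain length $N$ and the tolerance $\varepsilon$ uniform, which gives $P\left(X_{lN}\in B(x_{0},\kappa)\right)\ge(\varepsilon/\delta)^{N}$ for every block $l$. Your substitute---``steer with the odometer; non-attractiveness prevents collapse onto a sink''---is not an argument: a typical starting point $z$ of your tube lies in a gap, its deterministic orbit may converge to one of the non-attractive periodic orbits, and nothing in your proposal produces a route from such a $z$ into $U$ with probability bounded below.

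Beyond this, three specific claims are wrong or unjustified. First, the mesh $\max_{i}\mathrm{diam}\,M(i,\ell)$ need not tend to $0$: for merely continuous $f$, the restriction $f|_{\tilde{\omega}}$ is in general only a factor of (semi-conjugate to) the adding machine, and a fibre can contain two points of $\tilde{\omega}$ (Denjoy-type blow-ups of a Feigenbaum attractor give such maps satisfying the hypotheses), in which case the pieces containing those points never become small; note that the paper is careful to claim only that \emph{at least one} $U(i,k)$ has diameter $<\delta'$ for $k\ge M$. Hence your finer level $\ell_{1}$ with the piece containing $x$ inside $U$ may not exist. Second, ``$f^{s}$ carries $M(i_{0},\ell_{0})$ into this finer piece'' is impossible: $f^{s}(M(i,\ell_{0}))=M(i+s,\ell_{0})$ is always a whole level-$\ell_{0}$ piece and can never lie inside a piece of a finer level. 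Third, the cyclic trapping tube $f_{k+j}(J_{i})+[-\delta,\delta]\subseteq\mathrm{int}\,J_{i+1}$ does not follow from continuity plus separation of the pieces: continuity maps a small neighborhood into a \emph{prescribed} one, but arranging strict inclusions with a uniform margin consistently around the whole cycle requires a trapping lemma, not just uniform continuity. The paper avoids this issue entirely by citing Lemma 3.4 of \cite{Fedorenko:Characterization}, which yields a single relatively open interval $J\supseteq\tilde{\omega}$ with $f(\overline{J})\subseteq J$, and then choosing $\delta$ smaller than $\mathrm{dist}\left([0,1]\setminus J,\,f(\overline{J})\right)$; any repair of your tube construction would have to pass through a result of that kind (for instance applying that lemma to $f^{2^{\ell_{0}}}$ and then fattening the images with care), not through continuity alone.
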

\begin{proof}
Let $U=[u,v]$ be the convex hull of $\tilde{\omega}$.
By Lemma 3.4. in \cite{Fedorenko:Characterization} there is interval $J\supseteq U$ relatively open in $[0,1]$ such that $f(\overline{J})\subseteq J$.
First we show the following condition:
\begin{eqnarray}
\forall x\in J\ \exists n\in\mathbb{N}\colon\ f^{n}(x)\in U.
\label{tw_gl_war_1}
\end{eqnarray}
To see this assume that the opposite of (\ref{tw_gl_war_1}) is true.
Then $(f^{n}(x))$ has a subsequence that converges to a point of $U$.
This limit point can be $u$ or $v$ only.
Without loss of generality suppose this limit point is $u$.
Since $u\in\tilde{\omega}$ and it is not a periodic point, there is $\varepsilon_{u}>0$ and $n\in\mathbb{N}$ such that $f^{n}(B(u,\varepsilon_{u}))\subset (u,v)$. There is $m\in\mathbb{N}$ such that $f^{m}(x)\in B(u,\varepsilon_{u})$. Then $f^{m+n}(x)\in (u,v)$, which is a contradiction.
Therefore statement (\ref{tw_gl_war_1}) is proved.
\newline
Choose $x_{0}\in\tilde{\omega}$.
Assume that $x_{0}$ is not an isolated point of $\tilde{\omega}$.
Without loss of generality assume that $u\neq 0$, $v\neq 1$ and $J$ is open.
Let $\kappa>0$ be such that $B(x_{0},\kappa)\subset J$.
We show that
\begin{eqnarray}
\forall x\in J\ \forall\delta'>0\ \exists n\in\mathbb{N}\ \exists \delta'\textrm{-chain } (z_{0},\ldots,z_{n})\colon\ z_{0}=x, z_{n}\in B(x_{0},\kappa/4).
\label{delta-lancuch}
\end{eqnarray} 
By (\ref{tw_gl_war_1}) it is enough to prove the existence of a $\delta'$-chain mentioned above for each $x\in U$.
Choose $\delta'>0$.
Let $U(i,k)$ be the convex hull of $M(i,k)$ for each $i=1,\ldots,2^{k}$ and each $k=1,2,\ldots$, where $M(i,k)$ are as in Lemma \ref{rozklad_na_czesci_okresowe}.
Each of these convex hulls can be written as follows:
\begin{eqnarray*}
U(i,k)=U(i_{1},k+1)\cup R(i,k)\cup U(i_{2},k+1)
\end{eqnarray*}
for some interval $R(i,k)$.
It is obvious, that there is $M>0$ such that for each $k\ge M$, the diameter of at least one of the sets $U(i,k)$, $i=1,\ldots,2^{k}$ is smaller than $\delta'$.
In order to prove (\ref{delta-lancuch}) we consider the following cases:
\begin{enumerate}
\item[(a)]
Since the endpoints of every $U(i,k)$ are elements of $\tilde{\omega}$, the  existence of $\delta'$-chain mentioned in (\ref{delta-lancuch}) is obvious whenever
\begin{eqnarray*}
x\in\bigcup_{k=M}^{\infty}\bigcup_{i=1}^{2^{k}}U(i,k).
\end{eqnarray*}
\item[(b)]
Assume that $x$ is not a member of any $U(i,k)$ with $k\ge M$ and it is not a member of any $R(i,j)$. Then there is a sequence of positive integers $(i_{k})$ such that $i_{k}\in\{1,\ldots,2^{k}\}$ for each $k=1,2,\ldots$ and
\begin{eqnarray*}
x\in\bigcap_{k=1}^{\infty}U(i_{k},k).
\end{eqnarray*}
This leads to a contradiction.
\item[(c)]
Assume that $x$ is not a member of any $U(i,k)$ with $k\ge M$, that it is a member of some $R(i,j)$ and that it is a periodic point. Without loss of generality we assume $x\in R(1,1)$. Then $x$ is a fixed point. Since $x$ is not attractive, in every neighborhood of $x$ there is a point, whose trajectory does not converge to $x$. Let $y$ be such a point in $B(x,\delta')$. Then $(f^{n}(y))$ has a subsequence which converges to a point of $\tilde{\omega}$ (which finishes this part of the proof) or converges to another periodic point with a different period. By the intermediate value theorem there is $z$ between $x$ and $y$ such that its orbit intersects $\bigcup_{i=1}^{2^{k}}U(i,k)$ for some $k\ge M$. Then (a) applies.
\item[(d)]
Assume that $x$ is not a member of any $U(i,k)$ with $k\ge M$, it is a member of some $R(i,j)$ and it is not a periodic point. Then $(f^{n}(x))$ has a subsequence which converges to a point of $\tilde{\omega}$ (which finishes this part of the proof) or converges to a periodic point. Denote this periodic point by $y$. Then the trajectory of $x$ intersects $B(y,\delta'/2)$. Since $y$ is not attractive, in its every neighborhood (in particular in $B(y,\delta'/2)$) there is $z$ such that $(f^{n}(z))$ has a subsequence which converges to a point of $\tilde{\omega}$ (which finishes this part of the proof) or converges to another periodic point with a different period (then we use (c)).
\end{enumerate}
Then, property (\ref{delta-lancuch}), i.e., the existence of a $\delta'$-chain from each point of $x\in J$ to $B(x_{0},\kappa/4)$ is proved under the assumption that $x_{0}$ is not an isolated point of $\tilde{\omega}$.
This assumption can be removed due to the structure of $\tilde{\omega}$.
Define $r=\min\{|x-y|\colon\ x\in [0,1]\setminus J, y\in f(\overline{J})\}$.
Choose $\delta\in (0,r)$ and $\delta'\in (0,\delta/2)$.
Let $(x_{n}, n=0,\ldots,k_{x})$ be such a $\delta'$-chain from $x$ to $B(x_{0},\kappa/4)$.
Let $c_{1},\ldots,c_{r_{x}}$ be such that $x_{n+1}=f(x_{n})+c_{n}$, $n=0,\ldots,k_{x}-1$. Continuity of $f$ implies that there is $r_{x}>0$ such that for each $y\in B(x,r_{x})$ and for each $\delta'$-chain $(y_{n})$ defined by $y_{0}=y$ and $y_{n+1}=f(y_{n})+c_{n}$ with $n=0,\ldots,r_{x}-1$ we have $y_{r_{x}}\in B(x_{0},\kappa/2)$.
Continuity of $f$ implies also that there is $\varepsilon_{x}\in (0,\delta')$, such that whenever $d_{n}\in (0,\delta')$ and $|c_{n}-d_{n}|<\varepsilon_{x}$, then for each $z\in B(x,r_{x})$ and for each $\delta$-chain $(z_{n})$ defined by $z_{0}=z$ and $z_{n+1}=f(z_{n})+d_{n}$ for each $n=0,\ldots,r_{x}-1$ we have $z_{r_{x}}\in B(x_{0},3\kappa/4)$.
\newline
Since $\mathcal{A}=\{B(x,r_{x}),x\in\overline{J}\}$ is an open cover of $U$ and $U$ is compact,
a finite subcover of $\mathcal{A}$ can be chosen.
Let $\{B(x,r_{x}),x\in\mathcal{X}\}$ be such a finite subcover and define
\begin{eqnarray*}
N=\max_{x\in\mathcal{X}}k_{x},\ \varepsilon=\min_{x\in\mathcal{X}}\varepsilon_{x}.
\end{eqnarray*}
Notice that $N<\infty$ and $\varepsilon>0$, since $\mathcal{X}$ is a finite set.
\newline
The same manner as in the proof of Theorem \ref{fixed_points} leads to the conclusion that there is $K\in\mathbb{N}$ such that for each integer $k>K$ and each $j\in\{0,\ldots,N\}$,
\begin{eqnarray}
\left\|f_{k+j-1}\circ\ldots\circ f_{k}(x)-f^{j}(x)\right\|<\kappa/4.
\end{eqnarray}
Let $(X_{n})$ be any $(f_{k,\infty},\delta)$-process that begins at any point $x\in J$. For each $l\in\mathbb{N}$,
\begin{eqnarray*}
P\left(X_{lN}\in B(x_{0},\kappa)\right)\ge\left(\frac{\varepsilon}{\delta}\right)^{N}>0.
\end{eqnarray*}
Then $(f_{k,\infty},\delta)$-recurrence follows by Borel-Cantelli lemma. Equicontinuity of $\{f,f_{0},f_{1},\ldots\}$ follows from Ascoli theorem and it is used when replacing $f$ by elements of $f_{0,\infty}$.
\end{proof}

It is well-known that an infinite $\omega$-limit set $\tilde{\omega}$ of a function $f\in\mathcal{C}(I)$ of type $2^{\infty}$ has the following property: there is a sequence of compact periodic intervals $(J_{n})$ such that for each $n\in\mathbb{N}_{0}$, $J_{n}\supset J_{n+1}$, $J_{n}$ is periodic with period $2^{n}$ and
\begin{eqnarray*}
\tilde{\omega}\subseteq\bigcap\limits_{n=1}^{\infty}\bigcup\limits_{k=0}^{2^{n}-1}f^{k}(J_{n})
\end{eqnarray*}
(see \cite{Fedorenko:Characterization}).
If $f\in\mathcal{C}(I)$ is of type $2^{\infty}$ and has two infinite $\omega$-limit sets $\tilde{\omega}_{1}\neq\tilde{\omega}_{2}$, then exactly one of the following three cases holds: (i) $\tilde{\omega}_{1}\subset\tilde{\omega}_{2}$, (ii) $\tilde{\omega}_{2}\subset\tilde{\omega}_{1}$, (iii) $\tilde{\omega}_{1}\cap\tilde{\omega}_{2}=\emptyset$ (see \cite{Schweizer:Measures}).
It is easy to see, that if (iii) holds and $(J^{(1)})$, $(J^{(2)})$ are the sequences of periodic intervals described above for $\tilde{\omega}_{1}$ and $\tilde{\omega}_{2}$ respectively, then there exists $N\in\mathbb{N}$ such that for each integer $n>N$,
\begin{eqnarray*}
\bigcup\limits_{k=0}^{2^{n}-1}f^{k}\left(J^{(1)}_{n}\right)\cap\bigcup\limits_{k=0}^{2^{n}-1}f^{k}\left(J^{(2)}_{n}\right)=\emptyset.
\end{eqnarray*}

The above remarks allow us to formulate Theorem \ref{tw_gl_1} in more general form, namely the assumption that $V$ contains only one maximal infinite $\omega$-limit set can be relaxed.

\begin{corollary}
\label{Wniosek1}
Let $f\in\mathcal{C}(I)$ be of type $2^{\infty}$.
Assume that $f$ has a maximal infinite $\omega$-limit set $\tilde{\omega}$.
Assume that there exists $m\in\mathbb{N}$ such that the minimal closed and invariant interval $V$ containing $\tilde{\omega}$ contains exactly $m$ different maximal $\omega$-limit sets, $m$ periodic orbits of period $2^{n}$ for each $n\in\mathbb{N}$ and none of them is attractive.
Let $f_{0,\infty}$ be a sequence in $\mathcal{C}(I)$, which converges uniformly to $f$.
Then there exist $K\in\mathbb{N}$ and $\delta>0$ such that for each integer $k>K$, every point $x\in\tilde{\omega}$ is $(f_{k,\infty},\delta)$-recurrent.
\end{corollary}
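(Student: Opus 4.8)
The plan is to reduce the statement to Theorem~\ref{tw_gl_1} by isolating the component that carries $\tilde{\omega}$ from the remaining maximal infinite $\omega$-limit sets. The first observation I would record is that two distinct maximal $\omega$-limit sets are automatically disjoint: if $\tilde{\omega}$ and $\tilde{\omega}'$ are maximal and distinct, maximality of $\tilde{\omega}$ gives $\tilde{\omega}'\subseteq\tilde{\omega}$ or $\tilde{\omega}'\cap\tilde{\omega}=\emptyset$, and in the first case maximality of $\tilde{\omega}'$ applied to $\tilde{\omega}$ forces either equality (excluded) or $\tilde{\omega}\cap\tilde{\omega}'=\emptyset$ (impossible, as $\emptyset\neq\tilde{\omega}'\subseteq\tilde{\omega}$). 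Hence the $m$ maximal infinite $\omega$-limit sets in $V$, written $\tilde{\omega}=\tilde{\omega}_{1},\ldots,\tilde{\omega}_{m}$, are pairwise disjoint, so every pair falls into case (iii) of the trichotomy recalled before the corollary.

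By the remark on case (iii), each pair $\tilde{\omega}_{i},\tilde{\omega}_{j}$ has disjoint periodic-interval structures from some index on; taking the maximum over the finitely many pairs I would fix $n$ so large that the sets $W_{j}=\bigcup_{k=0}^{2^{n}-1}f^{k}(J^{(j)}_{n})$, $j=1,\ldots,m$, are pairwise disjoint, where $(J^{(j)}_{\cdot})$ is the sequence of periodic intervals attached to $\tilde{\omega}_{j}$. Put $W=W_{1}$. Then $W$ is compact, $f(W)\subseteq W$ because $f$ cyclically permutes the $2^{n}$ intervals $f^{k}(J^{(1)}_{n})$, it contains $\tilde{\omega}$, and it is disjoint from $\tilde{\omega}_{2},\ldots,\tilde{\omega}_{m}$. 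The geometric fact that makes isolation work is that the interfering periodic orbits of period $2^{\ell}$ with $\ell<n$ sit in the gaps separating the $2^{n}$ pieces of $W$, hence at positive distance from $W$; consequently, applying Lemma~3.4 of \cite{Fedorenko:Characterization} to each piece, I would build a forward-invariant neighborhood $\hat{J}\supseteq W$ (a union of $2^{n}$ relatively open intervals) with $f(\overline{\hat{J}})\subseteq\hat{J}$ that avoids $W_{2},\ldots,W_{m}$ and all periodic orbits of period $2^{\ell}$ with $\ell<n$.

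Inside $\hat{J}$ the only maximal infinite $\omega$-limit set is $\tilde{\omega}$ and the only periodic orbits are those of period $2^{\ell}$, $\ell\geq n$, associated with $\tilde{\omega}$, every one of which is among the $m$ non-attractive orbits of the hypothesis. Passing to the first-return map $f^{2^{n}}$ on a single piece $J^{(1)}_{n}$ (rescaled to $I$) turns this into precisely a single-component map of type $2^{\infty}$ to which Theorem~\ref{tw_gl_1} applies, its periodic orbit of period $2^{\ell}$ being the $f$-orbit of period $2^{n+\ell}$. I would then run the proof of Theorem~\ref{tw_gl_1} on $\hat{J}$ unchanged: choose $\delta>0$ smaller than the distance from $f(\overline{\hat{J}})$ to $[0,1]\setminus\hat{J}$ so that $\hat{J}$ is forward-invariant under $\delta$-perturbations and a process started at $x\in\tilde{\omega}\subseteq W$ can never leak to a neighboring component; then the decomposition of Lemma~\ref{rozklad_na_czesci_okresowe} together with the non-attractivity of the orbits meeting $\hat{J}$ yields the $\delta'$-chain property (\ref{delta-lancuch}) from any point of $\hat{J}$ to $B(x_{0},\kappa/4)$ via cases (a)--(d), and the resulting uniform lower bound on the return probability combined with the Borel-Cantelli lemma gives $(f_{k,\infty},\delta)$-recurrence of every $x\in\tilde{\omega}$ for all $k>K$. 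Equicontinuity of $\{f,f_{0},f_{1},\ldots\}$ and the tail convergence $f_{n}\rightrightarrows f$ localize to $\hat{J}$ unchanged by the Ascoli theorem.

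The step I expect to be the real obstacle is precisely the replacement of Theorem~\ref{tw_gl_1}'s single convex hull $U$ and single interval $J$ by the disconnected neighborhood $\hat{J}$: since the convex hull of $\tilde{\omega}$ may now enclose a foreign component $\tilde{\omega}_{j}$ in one of its gaps, the attraction statement (\ref{tw_gl_war_1}) and the chain construction must be carried out piece by piece, using the return map $f^{2^{n}}$ to recover the single-component picture on $J^{(1)}_{n}$. The verifications that no lower-period orbit or foreign component is trapped in $\hat{J}$ (this is where the hypotheses ``exactly $m$'' and the eventual disjointness of the $W_{j}$ are both used) and that the $2^{n}$-fold cyclic structure does not disturb the block-independence needed for Borel-Cantelli are routine but should be spelled out carefully.
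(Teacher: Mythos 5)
Your proposal is correct and follows essentially the same route as the paper: the paper gives no separate proof of Corollary~\ref{Wniosek1}, deriving it directly from the remarks preceding it (the trichotomy for infinite $\omega$-limit sets and the eventual disjointness of the periodic-interval towers $\bigcup_{k=0}^{2^{n}-1}f^{k}(J^{(j)}_{n})$) combined with the argument of Theorem~\ref{tw_gl_1}, which is exactly the reduction you carry out. Your isolation of the tower of $\tilde{\omega}$ at a sufficiently deep level $n$, followed by a localized rerun of the $\delta'$-chain and Borel--Cantelli argument on a forward-invariant neighborhood avoiding the other components, is precisely the intended filling-in of that sketch.
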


\begin{remark}
Let $f$ and $\tilde{\omega}$ be as above. If $x_{0}$ is an isolated point of $\tilde{\omega}$, then $x_{0}$ is not a recurrent point for $f$.
\end{remark}

\begin{remark}
Let $x_{0}$ and $f$ be as in Theorem \ref{Tw_punkty_okresowe} or Corollary \ref{Wniosek1}. Then for each $k\in\mathbb{N}$ there exists $\delta_{k}>0$ such that $x_{0}$ is $(f^{k},\delta_{k})$-recurrent.
\end{remark}

The following example shows that the statement of Theorem \ref{tw_gl_1} is not true if $f$ has infinitely many infinite $\omega$-limit sets. Example \ref{przyklad_2_2} is a modification of Example 4.1 from \cite{Szala:Recurrence}.

\begin{example}
\label{przyklad_2_2}
Let $\tau(x)=1-|2x-1|$ for each $x\in [0,1]$. Let $g\in\mathcal{C}(I)$ be of type $2^{\infty}$ defined by
\begin{eqnarray*}
g(x)=\begin{cases}\tau^{2}(\lambda ), & x\in[0,\tau(\lambda)], \\ \tau(x), & x\in (\tau(\lambda),1],\end{cases}
\end{eqnarray*}
where $\lambda=0.8249080\ldots$ (see \cite{Misiurewicz:Smooth}, Remark 4).
We use the same notation as in Example 4.1. in \cite{Szala:Recurrence}, i.e.,
$\{M(i,k), i=1,\ldots,2^{k}\}$, $k\in\mathbb{N}$ is a decomposition of $\tilde{\omega}$ into periodic portions of period $2^{k}$ (see Lemma \ref{rozklad_na_czesci_okresowe}),
$U^{(i)}_{k}=[u^{(i)}_{k},v^{(i)}_{k}]$ is a convex hull of $M(i,k)$ where $i=1,\ldots,2^{k}$,
$k\in\mathbb{N}$.
For each integer $k>1$ and $i=1,\ldots,2^{k}$ there is a periodic point $x^{(i)}_{k}$ of period $2^{k}$ in $U_{k}^{(i)}$.
Moreover,
\begin{eqnarray*}
x_{k}^{(i)}\in\left(v_{k+1}^{(2i-1)},u_{k+1}^{(2i)}\right).
\end{eqnarray*}
For each $k\in\mathbb{N}$ and each $i=1,\ldots,2^{k}$ define
\begin{eqnarray*}
\varepsilon_{k}^{(i)}=\min\left\{x_{k}^{(i)}-v_{k+1}^{(2i-1)},u_{k+1}^{(2i)}-x_{k}^{(i)}\right\}.
\end{eqnarray*}
For each $k\in\mathbb{N}$ define
\begin{eqnarray*}
\varepsilon_{k}=\min\left\{\varepsilon_{k}^{(i)},i=1,\ldots,2^{k}\right\}.
\end{eqnarray*}
Let
\begin{eqnarray*}
I_{k}^{(i)}=\left(x_{k}^{(i)}-\varepsilon_{k},x_{k}^{(i)}+\varepsilon_{k}\right).
\end{eqnarray*}
Let $(i_{k})$ be a sequence such that for each $k\in\mathbb{N}$, $i_{k}\in\{1,\ldots,2^{k}\}$ and
\begin{eqnarray*}
\varepsilon_{k}^{(i_{k})}=\varepsilon_{k}.
\end{eqnarray*}
Let $f$ be defined such that
\begin{enumerate}
\item
$f$ equals $g$ on the set $[0,1]\setminus\bigcup\nolimits_{k=1}^{\infty}\bigcup\nolimits_{i=1}^{2^{k}}I_{k}^{(i)}$;
\item
for each ach $k\in\mathbb{N}$ and $i=i_{k}$,
\newline
inside the square
\begin{eqnarray*}
\left[x_{k}^{(i_{k})}-\frac{2}{5}\varepsilon_{k},x_{k}^{(i_{k})}+\frac{2}{5}\varepsilon_{k}\right]\times \left[f\left(x_{k}^{(i_{k})}\right)-\frac{2}{5}\varepsilon_{k},\left(x_{k}^{(i_{k})}\right)+\frac{2}{5}\varepsilon_{k}\right]
\end{eqnarray*}
we define $f$ as a ``diminished copy'' of the graph of $g$,
\newline
on the interval
\begin{eqnarray*}
\left[x_{k}^{(i_{k})}-\frac{4}{5}\varepsilon_{k},x_{k}^{(i_{k})}-\frac{2}{5}\varepsilon_{k}\right)
\end{eqnarray*}
we define $f$ as a constant function equal
\begin{eqnarray*}
f\left(x_{k}^{(i_{k})}-\frac{2}{5}\varepsilon_{k}\right),
\end{eqnarray*}
on the interval
\begin{eqnarray*}
\left(x_{k}^{(i_{k})}+\frac{2}{5}\varepsilon_{k},x_{k}^{(i_{k})}+\frac{4}{5}\varepsilon_{k}\right]
\end{eqnarray*}
we define $f$ as a constant function equal
\begin{eqnarray*}
f\left(x_{k}^{(i_{k})}+\frac{2}{5}\varepsilon_{k}\right);
\end{eqnarray*}
\item
for each $k\in\mathbb{N}$, for each $i\in\{1,\ldots,2^{k}\}\setminus\{i_{k}\}$ and for each
\begin{eqnarray*}
x\in\left[x_{k}^{(i)}-\frac{4}{5}\varepsilon_{k},x_{k}^{(i)}+\frac{4}{5}\varepsilon_{k}\right]
\end{eqnarray*}
we define
\begin{eqnarray*}
f(x)=x+f\left(x_{k}^{(i)}\right)-x_{k}^{(i)};
\end{eqnarray*}
\item
for each $k\in\mathbb{N}$ and for each $i\in\{1,\ldots,2^{k}\}$, $f$ is affine on the intervals
\begin{eqnarray*}
\left[x_{k}^{(i)}-\varepsilon_{k},x_{k}^{(i)}-\frac{4}{5}\varepsilon_{k}\right]
\end{eqnarray*}
and
\begin{eqnarray*}
\left[x_{k}^{(i)}+\frac{4}{5}\varepsilon_{k},x_{k}^{(i)}+\varepsilon_{k}\right].
\end{eqnarray*}
\end{enumerate}

\noindent
Assume that the points of infinite $\omega$-limit set $\tilde{\omega}$ of $f$ are $(f,\delta)$-recurrent for some $\delta>0$.
There exists positive integer $k_{0}>1$ such that $\varepsilon_{k_{0}}<\delta$.
Let $\delta'=2\varepsilon_{k_{0}}/5$.
It follows from the definition that the points of $\tilde{\omega}$ are $(f,\delta')$-recurrent.
Let $(X_{n})$ be an $(f,\delta')$-process that begins at some point $x\in\tilde{\omega}$.
Let $J_{k}=\bigcup_{i=1}^{2^{k}}I_{k}^{(i)}$. Define the events $A_{n}=\{X_{n}\in J_{k_{0}}\}$, $n=1,2,\ldots$.
Notice that if $X_{m}\in J_{k_{0}}$ for some $m\in\mathbb{N}$, then $X_{n}\in J_{k_{0}}$ for each $n\ge m$, i.e.,
$A_{m}\subseteq\bigcap_{n=m}^{\infty}A_{n}$ for each $m\in\mathbb{N}_{0}$. Then
\begin{eqnarray*}
P\left(\bigcup_{m=1}^{\infty}\bigcap_{n=m}^{\infty}A_{n}\right)\ge P\left(\bigcup_{m=1}^{\infty}A_{m}\right)=1,
\end{eqnarray*}
which means that the points of $\tilde{\omega}$ cannot be $(f,\delta)$-recurrent with any $\delta>0$.
Notice that $(f,\delta)$-recurrence is a particular example of $(f_{0,\infty},\delta)$-recurrent with $f_{0,\infty}=(f,f,\ldots)$.
\end{example}

\begin{remark}
Using a similar modification of function $g$ from Example \ref{przyklad_2_2} we can obtain a function $f$ of type $2^{\infty}$ with the following properties: $f$ has exactly one infinite $\omega$-limit set $\tilde{\omega}$; every periodic point of $f$ is a member of an open interval of periodic points with the same period; for each $x\in\tilde{\omega}$ and each $\delta>0$, $x$ is not $(f,\delta)$-recurrent.
\end{remark}

\section{Approximation by periodic orbits of the limit function}
We generalize the definition of nonchaoticity (see Introduction) in the following way: a sequence $f_{0,\infty}$ in $\mathcal{C}(I)$, which converges uniformly to $f\in\mathcal{C}(I)$, is {\itshape nonchaotic with respect to small random perturbations} if for each $\varepsilon>0$ there exist $K\in\mathbb{N}$ and $\delta>0$ such that for each integer $k>K$, for each $\delta'\in (0,\delta)$ and for each $x_{0}\in I$,
\begin{eqnarray}
P\left(\exists p\in\mathrm{Per}(f)\colon\ \limsup_{n\to\infty}\left|X_{n}-f^{n}(p)\right|<\varepsilon\right)=1,
\label{nonchaotic_with_respect_to_random_perturbations_2}
\end{eqnarray}
holds, where $(X_{n})$ is any $(f_{k,\infty},\delta')$-process which begins at $x_{0}$.
Here we provide a sufficient condition for $f_{0,\infty}$ to be nonchaotic with respect to small random perturbations.

\begin{theorem}
Let $f_{0,\infty}$ be a sequence in $\mathcal{C}(I)$ converging uniformly to $f\in\mathcal{C}(I)$. Let $f$ be nonchaotic stable. Then $f_{0,\infty}$ is nonchaotic with respect to small random perturbations.
\end{theorem}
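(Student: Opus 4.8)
The plan is to adapt the argument that \cite{Jankova:ChaosIn} uses in the autonomous case, replacing $f$ by the tail maps $f_{k+n}$ and controlling the resulting errors by uniform convergence together with the equicontinuity of $\{f,f_{0},f_{1},\ldots\}$ supplied by the Ascoli theorem, exactly as in the proofs of Theorem \ref{fixed_points} and Theorem \ref{tw_gl_1}. Fix $\varepsilon>0$. First I would invoke nonchaotic stability of $f$ to produce $\delta_{0}>0$ such that every $g\in\mathcal{C}(I)$ with $\|f-g\|<\delta_{0}$ and every $x\in I$ admit a periodic point $p$ of $g$ with $\limsup_{n\to\infty}|g^{n}(x)-g^{n}(p)|<\varepsilon/2$. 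Because $f_{n}\rightrightarrows f$, the pseudoperiodic points used to approximate orbits may be taken to be genuine periodic points of the limit $f$ (this is the remark attributed to \cite{Canovas:Li}), so the approximating orbits in \eqref{nonchaotic_with_respect_to_random_perturbations_2} will be of the form $(f^{n}(p))$ with $p\in\mathrm{Per}(f)$.

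Next I would use uniform convergence to fix $K\in\mathbb{N}$ so large that $\|f_{k+n}-f\|<\delta_{0}/2$ for all $k>K$ and all $n\ge 0$, and set $\delta\in(0,\delta_{0}/2)$. For $\delta'\in(0,\delta)$ and any $(f_{k,\infty},\delta')$-process $(X_{n})$ I would rewrite $X_{n+1}=f(X_{n})+\eta_{n}$ with $\eta_{n}=(f_{k+n}(X_{n})-f(X_{n}))+\xi_{n}$ and $|\eta_{n}|\le\|f_{k+n}-f\|+\delta'<\delta_{0}$. Thus almost every realized trajectory is a $\delta_{0}$-chain of $f$, i.e.\ a perturbed orbit lying in the regime controlled by nonchaotic stability; the nonautonomy has been absorbed into an extra, state-dependent but uniformly small, perturbation term.

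The heart of the proof is to show that, almost surely, the trajectory is eventually trapped within the $\varepsilon$-neighborhood of the orbit $(f^{n}(p))$ of some $p\in\mathrm{Per}(f)$. I would establish two ingredients. First, that nonchaotic stability provides a trapping structure: for $\delta_{0}$ small enough there are neighborhoods of the relevant periodic orbits of $f$ that remain $\varepsilon$-invariant under any perturbed nonautonomous dynamics whose per-step error is below $\delta_{0}$, the estimates being uniform in $k$ by equicontinuity of $\{f,f_{0},f_{1},\ldots\}$. Second, a positive lower bound, independent of the present state and of $k>K$, on the probability of reaching such a trapping neighborhood within a fixed number $N$ of steps. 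This second ingredient is obtained exactly as in the proofs of Theorem \ref{fixed_points} and Theorem \ref{tw_gl_1}: a suitable deterministic approximate orbit toward a periodic orbit is realizable as a $\delta'$-chain with probability bounded below by a quantity of the form $(\varepsilon_{*}/\delta)^{N}>0$, the required approximation $\|f_{k+N-1}\circ\cdots\circ f_{k}(x)-f^{N}(x)\|$ being small uniformly in $k$ by the same Ascoli/Cauchy argument used there.

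Finally, splitting time into consecutive blocks of length $N$, the events that the block starting at $lN$ reaches a trapping neighborhood have probability bounded below by a common constant $c>0$ and are, by independence of the perturbations, handled by the Borel-Cantelli lemma exactly as in \cite{Szala:Recurrence}; hence almost surely the trajectory enters a trapping neighborhood in finite time, and the $\varepsilon$-invariance keeps it $\varepsilon$-close to $(f^{n}(p))$ from then on, which is \eqref{nonchaotic_with_respect_to_random_perturbations_2}. The \textbf{main obstacle} I anticipate is the first ingredient of the crux: passing from the hypothesis of nonchaotic stability, which only asserts asymptotic closeness of the \emph{unperturbed autonomous} iterates $g^{n}(x)$ to a (possibly non-attractive) periodic orbit, to a genuine trapping statement that survives both the independent per-step perturbations and the time-varying maps $f_{k+n}$, with all constants uniform in $k>K$. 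Converting ``asymptotically close'' into ``forward $\varepsilon$-invariant'' is delicate precisely because the periodic orbits need not be attractive, and it is here that the finite structure of periodic orbits forced by stability, together with uniform convergence and equicontinuity, must be exploited.
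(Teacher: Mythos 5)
Your first reduction is exactly right, and it coincides with the paper's: choosing $K$ so that $\|f_{k+n}-f\|<\delta_{0}/2$ for $k>K$, every realization of an $(f_{k,\infty},\delta')$-process with $\delta'<\delta_{0}/2$ is, pointwise in $\omega$, an $(f,\delta_{0})$-chain of the limit map. But from there the paper does not (and need not) run any trapping/Borel--Cantelli argument. It closes the proof by invoking a known result of \cite{Jankova:ChaosIn}: nonchaotic stability of $f$ implies that for each $\varepsilon>0$ there is $\delta>0$ such that \emph{every} $(f,\delta)$-chain $(x_{n})$ satisfies $\limsup_{n\to\infty}|x_{n}-f^{n}(p)|<\varepsilon$ for some $p\in\mathrm{Per}(f)$ depending on the chain. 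Applied realization by realization, this yields a (random) periodic point $p(\omega)$ for every $\omega$, so the event in \eqref{nonchaotic_with_respect_to_random_perturbations_2} holds surely, not merely almost surely; no probabilistic estimate enters at all.

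The step you flag as the ``main obstacle'' is therefore a genuine gap, and it cannot be filled the way you propose. Your plan requires $\varepsilon$-invariant (forward-trapping) neighborhoods of the relevant periodic orbits that survive independent per-step perturbations; since the hypotheses allow all these orbits to be non-attractive, such forward-invariant neighborhoods need not exist, and the independent uniform noise will almost surely push the process out of any neighborhood that is not dynamically trapping. Moreover, a ``reach a trapping region, then stay'' scheme implicitly fixes the target orbit before the tail of the trajectory is seen, whereas the shadowing periodic point must be allowed to depend on the entire realization (this is exactly the point of Remark \ref{uwaga}: with positive probability the process settles near $0$ and with positive probability near $1$, so no pre-selected orbit or finite family of trapping sets can work). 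What substitutes for your missing ingredient is precisely the chain-shadowing consequence of nonchaotic stability quoted above; without citing or reproving that result of \cite{Jankova:ChaosIn}, your argument cannot be completed, and with it, the Borel--Cantelli machinery (which the paper uses only for the recurrence theorems, Theorem \ref{fixed_points} and Theorem \ref{tw_gl_1}) becomes superfluous here.
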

\begin{proof}
Assume that $f$ is nonchaotic stable. It is already known that for each $\varepsilon>0$ there is $\delta>0$ such that for each $x_{0}\in I$ and each $(f,\delta)$-chain $(x_{n})$ starting at $x_{0}$ there is periodic point $p$ of $f$ such that $\limsup_{n\to\infty}|x_{n}-f^{n}(p)|<\varepsilon$ (see \cite{Jankova:ChaosIn}).
Let $\varepsilon>0$ be arbitrary.
Let $\delta$ be as above.
Since $f_{n}\rightrightarrows f$, there is $K\in\mathbb{N}$ such that for each integer $k\ge K$, $\|f_{k}-f\|<\delta/2$. Choose any $k>K$ and $x_{0}\in I$.
Let $(Y_{n})$ by any $(f_{k,\infty},\delta/2)$-process that begins at $x_{0}$.
For each $\omega\in\Omega$, $(Y_{n}(\omega))$ is an $(f,\delta)$-chain (we work with a fixed probability space $(\Omega,\Sigma,P)$ - see Introduction).
Thus there is $p\in\mathrm{Per}(f)$ (which depends on $\omega$) such that $\limsup_{n\to\infty}|Y_{n}(\omega)-p(\omega)|<\varepsilon$.
Then (\ref{nonchaotic_with_respect_to_random_perturbations_2}) holds.
\end{proof}

\begin{remark}
\label{uwaga}
The definition which states whether or not a function is nonchaotic with respect to small random perturbations was introduced in \cite{Jankova:ChaosIn} with the following, slightly different, condition: for each $\varepsilon>0$ there exists $\delta>0$ such that for each $x_{0}\in I$ and each $(f,\delta)$-process $(X_{n})$ starting at $x_{0}$ there is $p\in\mathrm{Per}(f)$ such that
\begin{eqnarray*}
P\left(\limsup_{n\to\infty}\left|X_{n}-f^{n}(p)\right|<\varepsilon\right)=1.
\end{eqnarray*}
In this definition $(X_{n})$ is a sequence of random variables and $p$ has to be a random variable too. Otherwise the following problem may occur: let $f$ be the function presented at Figure \ref{wykres5}. Let $(X_{n})$ be any $(f,\delta)$-process with $x_{0}=0.5$ and $\delta\in (0,0.2)$. Then
\begin{eqnarray*}
P\left(\bigcup_{k=0}^{\infty}\bigcap_{n=k}^{\infty}\left\{X_{n}\in\left[0,\frac{1}{5}\right)\right\}\right)=P\left(\bigcup_{k=0}^{\infty}\bigcap_{n=k}^{\infty}\left\{X_{n}\in \left(\frac{4}{5},1\right]\right\}\right)>0.
\end{eqnarray*}
It is obvious that there is no periodic point of $f$ whose orbit intersects infinitely many times a small neighborhood of 0 and a small neighborhood of 1.

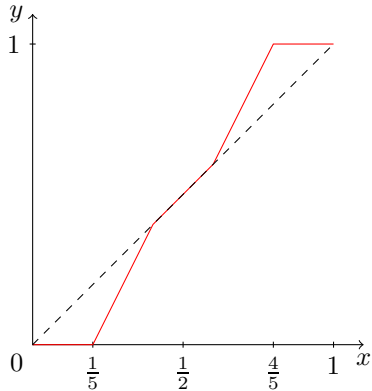
\begin{figure}[htb]
\begin{center}
\begin{tikzpicture}[scale=0.8]
\draw [<->] (0,5.5) node [left] {$y$} -- (0,0) -- (5.5,0) node [below] {$x$};
\draw (-0.05,5) node [left] {1} -- (0.05,5);
\draw (0,0) node [below left] {0} -- (0,0);
\draw (1,-0.05) node [below] {$\frac{1}{5}$}-- (1,0.05);
\draw (2.5,-0.05) node [below] {$\frac{1}{2}$}-- (2.5,0.05);
\draw (4,-0.05) node [below] {$\frac{4}{5}$}-- (4,0.05);
\draw (5,-0.05) node [below] {1} -- (5,0.05);
\draw [red] (0,0) -- (1,0) -- (2,2) -- (3,3) -- (4,5) -- (5,5);
\draw [dashed] (0,0) -- (5,5);
\end{tikzpicture}
\caption{The graph of $f$ in Remark \ref{uwaga}.}
\label{wykres5}
\end{center}
\end{figure}
\end{remark}



\begin{thebibliography}{9}
\bibitem[Balibrea \& Oprocha, 2012]{Balibrea:Weak}
Balibrea,~F., Oprocha,~P. [2012]
``Weak mixing and chaos in nonautonomous discrete systems,''
{\it Applied Mathematics Letters},
\textbf{25},
1135--1141.

\bibitem[C\'anovas, 2011]{Canovas:Li}
C\'anovas,~J. S. [2011]
``Li-Yorke chaos in a class of nonautonomous discrete systems,''
{\it Journal of Difference Equations and Applications},
\textbf{17},
479--486.

\bibitem[Coutinho {\it et al.}, 2006]{Coutinho:Threshold}
Coutinho,~F. A. B., Burattini,~M. N., Lopez,~L. F., Massad,~E. [2006]
``Threshold conditions for a non-autonomous epidemic system describing the
population dynamics of dengue,''
{\it Bulletin of Mathematical Biology},
\textbf{68},
2263--2282.

\bibitem[De la Sen, 2008]{Sen:The}
De la Sen,~M. [2008]
``The generalized Beverton-Holt equation and the control of populations,''
{\it Applied Mathematical Modelling},
\textbf{32}
2312--2328.


\bibitem[Dieudonn\'e, 1961]{Dieudonne:Foundations}
Dieudonn\'e,~J. [1961] {\it Foundations of Modern Analysis},
2nd printing
(Academic Press, New York and London).

\bibitem[Elaydi \& Sacker, 1992]{Elyadi:Global}
Elaydi,~S., Sacker,~R. J. [1992]
``Global stability of periodic orbits of non-autonomous difference equations and population biology,''
{\it Journal of Differential Eqations},
\textbf{208},
258--273.


\bibitem[Fedorenko {\it et al.}, 1990]{Fedorenko:Characterization}
Fedorenko,~V. V., \v{S}arkovskii,~A. N., Sm\'ital,~J. [1990]
``Characterization of weakly chaotic maps of the interval,''
{\itshape Proc. Amer. Math. Soc}
\textbf{110},
141--148.



\bibitem[Jankov\'a, 1992]{Jankova:ChaosIn}
Jankov\'a, K. [1992]
``Chaos in dynamical systems with randomly perturbed
trajectories,'' {\itshape Proc. European Conf. Iteration Theory, Lisbon
1991} (World
Scientific, Singapore), pp. 146--156.

\bibitem[Jankov\'a, 1997]{Jankova:Systems}
Jankov\'a, K. [1997] ``Systems with random perturbations,'' {\it Grazer
Math. Ber}
\textbf{334},
147--152.

\bibitem[Jankov\'a~\& Sm\'ital, 1986]{Jankova:Characterization}
Jankov\'a, K. \& Sm\'ital, J. [1986]
``A characterization of chaos,''
{\it Bull. Australl. Math. Soc.}
\textbf{34},
283--292.

\bibitem[Jankov\'a~\& Sm\'ital, 1995]{Jankova:Maps}
Jankov\'a, K. \& Sm\'ital, J. [1995]
``Maps with random perturbations are generally not chaotic,''
{\it Int. J. Bifurcation and Chaos}
\textbf{5},
1375--1378.

\bibitem[Joshi {\it et al.}, 1992]{Joshi:Nonlinear}
Joshi,~N., Burtonclay,~D., Halburd,~R. G. [1992]
``Nonlinear nonautonomous discrete dynamic-systems from a general discrete
isomonodromy problem,''
{\it Letters in Mathematical Physics},
\textbf{26},
123--131.

\bibitem[Kolyada \& Snoha, 1996]{Kolyada:Topological}
Kolyada,~S., Snoha,~L. [1996]
``Topological entropy of nonautonomous dynamical systems,''
{\it Random \& Computational Dynamics},
\textbf{4},
205--233.

\bibitem[Lou {\it et al.}, 2012]{Lou:Threshold}
Lou,~J., Lou,~Y. J.,Wu,~J. H. [2012]
``Threshold virus dynamics with impulsive antiretroviral drug effects,''
{\it Journal of Mathematical Biology},
\textbf{65},
623--652.

\bibitem[Misiurewicz \& Sm\'ital, 1988]{Misiurewicz:Smooth}
Misiurewicz,~M., Sm\'ital,~J. [1988]
``Smooth chaotic maps with zero topological entropy,''
{\it Ergod. Th. \& Dynam. Sys.},
\textbf{8},
421--424.

\bibitem[Schweizer \& Sm\'ital, 1994]{Schweizer:Measures}
Schweizer,~B., Sm\'ital,~J. [1994]
``Measures of chaos and a spectral decomposition of a dynamical system on the interval,''
{\it Trans. Amer. Math. Soc.},
\textbf{344},
737--754.

\bibitem[Sm\'ital, 1986]{Smital:Chaotic}
Sm\'ital, J. [1986]
``Chaotic functions with zero topological entropy,''
{\it Transactions of The American Mathematical Society}
\textbf{1},
269--282.

\bibitem[\v{S}tef\'ankov\'a, 2013]{Stefankova:Inheriting}
\v{S}tef\'ankov\'a,~M [2013]
``Inheriting of chaos in nonautonomous dynamical systems,''
arXiv:1311.4083v1.

\bibitem[Sza\l{}a, 2013]{Szala:Recurrence}
Sza\l{}a,~L. [2013]
''Recurrence in systems with random perturbations,''
{\it Int. J. Bifurcation and Chaos}
\textbf{6},
1350110.

\bibitem[Sza\l{}a, 2014]{Szala:FC}
Sza\l{}a,~L. [2014]
``Recurrence in systems with random perturbations in the finite dimensional case,''
to appear in 2014 in
{\itshape Int. J. Bifurcation and Chaos}.

\bibitem[Wright, 2013]{Wright:Periodic}
Wright,~J. [2013]
``Periodic systems of population models and enveloping functions,''
{\it Computers and Mathematics with Applications},
\textbf{66},
2178--2195.

\bibitem[Zhang, 2006]{Zhang:Discrete}
Zhang,~W. B. [2006 ]
{\it Discrete Dynamical Systems, Bifurcations and Chaos in Economics}
1st edition
(Elsevier, Amsterdam).
\end{thebibliography}
\end{document}